\documentclass[preprint,12pt]{elsarticle}

\usepackage[utf8]{inputenc}
\usepackage{amsthm,amsmath,amssymb,amsfonts}
\usepackage{epsfig}
\usepackage{mathrsfs}
\usepackage{graphicx,color}
\usepackage{eucal}
\usepackage{enumerate}
\usepackage{randtext}

\usepackage{bbm}



\theoremstyle{definition}
\newtheorem{defn}{Definition}[section]
  
\theoremstyle{plain}
\newtheorem{thm}{Theorem}

\theoremstyle{remark}
\newtheorem{rmk}{Remark}[section]

\newcommand{\R}{\mathbb{R}}

\newcommand{\Cur}{C}

\newcommand\restr[1]{\raisebox{-.5ex}{$|$}_{#1}}

\newcommand\restrcB{_{(t,\epsilon)}}

\newcommand\indi[1]{\mathbb{I}_{\left\{#1\right\}}}

\numberwithin{equation}{section}

\font\eka=cmex10
\def\ind{\mathrel{\hbox{\rlap{%
\hbox to 7.5pt{\hrulefill}}\raise6.6pt\hbox{\eka\char'167}}}}



\journal{SI of Journal of Multivariate Analysis}

\begin{document}

\begin{frontmatter}


\title{Integrated shape-sensitive functional metrics}


\author{Sami Helander\fnref{lb1}}  
\author{Petra Laketa\fnref{lb2}}  
\author{Pauliina Ilmonen\fnref{lb1}}  
\author{Stanislav Nagy$^\ast$\fnref{lb2}}  
\author{Germain Van Bever\fnref{lb3}}  
\author{Lauri Viitasaari\fnref{lb4}} 

\cortext[cor1]{\textit{Correspondence:} Stanislav Nagy, \textit{Email: \randomize{nagy@karlin.mff.cuni.cz}}}


\address[lb1]{Aalto University School of Science, Finland}
\address[lb2]{Charles University, Czech Republic}
\address[lb3]{University of Namur, Belgium}
\address[lb4]{Aalto University School of Business, Finland}

\begin{abstract}
This paper develops a new integrated ball (pseudo)metric which provides an intermediary between a chosen starting (pseudo)metric $d$ and the $L_p$ distance in general function spaces. Selecting $d$ as the Hausdorff or Fr\'echet distances, we introduce integrated shape-sensitive versions of these supremum-based metrics. The new metrics allow for finer analyses in functional settings, not attainable applying the non-integrated versions directly. Moreover, convergent discrete approximations make computations feasible in practice.
\end{abstract}



\begin{keyword}
Fr\'echet distance \sep Functional data analysis \sep Hausdorff distance  \sep Pseudometric.
\end{keyword}

\end{frontmatter}


\section{Introduction}

Measuring the shape similarity or dissimilarity of functions has been a concurrent problem in many fields of research. In functional data analysis, the shape features often comprise the key modes of variance in the functional observations and comparison problems such as classification are common. This can be seen, for instance, in the functional depth literature, where recent advances have begun emphasizing shape features as the focal point of analysis. 
See, for example \citet{SgueraEtal2014}, \citet{ClaeskensEtal2014}, \citet{elastidepth} and \citet{NagyEtal2017, NagyEtal2020} for recent approaches to shape-sensitive functional depths. 

Similarly, attention has been devoted to developing metrics that are more sensitive to variations in shape. The use of such metrics has become commonplace in shape and pattern matching applications in machine learning and computer vision. 
See \citet{HuttenlocherEtal1993}, \citet{Rucklidge1997}, \citet{YiAndCamps1999}, \citet{VeltkampAndHagedoorn2001}, \citet{DeCarvalhoEtal2006}, \citet{AltAndGodau1992}, \citet{AltEtal2003}, \citet{BrakatsoulasEtal2005}, \citet{AronovEtal2006} and \citet{JiangEtal2008} for a wide variety of applications for such shape-sensitive metrics.
Popular approaches include considering Hausdorff or Fr\'echet distances between the graphs of the functions. However, as both of these distances are based on a global supremum, they do not consider the local likeness of the graphs. Therefore, in the literature some attention has been devoted to developing averaged (i.e. integrated) versions of these distances. 

\citet{Baddeley1992} developed an integrated alternative to the Hausdorff metric by substituting the supremum for an integral in an expression equivalent to the usual definition. 
A more direct approach was taken by \citet{MarronAndTsybakov1995} who proposed 
an error criterion for nonparametric curve estimation, where the features of the curves are matched ``vertically'' in value and ``horizontally'' in timing. 
\citet{SchutzeEtal2012} proposed an averaged version of the Hausdorff distance by replacing the \textit{sup-inf} in its definition with an $L_p$ mean. 
\citet{VargasAndBogoya2018} and \citet{BogoyaEtal2018} further developed the notion to an integrated counterpart. 

The integrated Fr\'echet distance was studied in \citet{Buchin2007} who considered several approaches for both a summed and an averaged Fr\'echet distance and analyzed their properties. 
\citet{EfratEtal2007} developed a notion of dynamic time warping that can be seen as an integrated version of the Fr\'echet distance. \citet{BuchinEtal2009} considered the partial curve matching problem where the similarity of two curves was measured by the largest fraction of their lengths that could be matched within a certain Fr\'echet distance threshold. 

Many of the approaches discussed above, however, do not satisfy the key properties of a metric. See for example \citet{MarronAndTsybakov1995}, \citet{MaheshwariEtal2018}, and the extensive analysis provided by \citet{Buchin2007} for more details. Specifically, in the case of integrated Fr\'echet distances, the triangle inequality fails for approaches considered previously.

Use of pseudometrics (also called semimetrics) in FDA has been shown to be of interest. See, for example, \citet{AneirosEtAl2019b,AneirosEtAl2019a}, \citet{Cuevas2014} and \citet{GoiaAndVieu2016}. In this paper, we generalize any pseudometric $d$ on a function space to a family of integrated ball pseudometrics $d^{\epsilon,p}$, obtained through local integration of $d$. The locality parameter $\epsilon$ allows to control the shape sensitivity of the pseudometric by balancing between $d$ (for $\epsilon\to \infty$) and the $L_p$ distance (for $\epsilon\to 0$). Furthermore, we show that $d^{\epsilon,p}$ is a pseudometric for any $\epsilon$. Convergence of suitable discretizations ensures that the integrated ball pseudometric can be computed in practice. When the integrated distance $d$ is a metric, then, under mild technical assumptions, the integrated version is a metric as well.

The theoretical results are derived for general (pseudo)metrics $d$. In a second part of this paper, we apply the integrated ball construction to the Hausdorff and Fr\'echet distances, $d_H$ and $d_F$, and provide their locally integrated counterparts, $d_H^{\epsilon,p}$ and $d_F^{\epsilon,p}$. We show important properties of the integrated ball Hausdorff and Fr\'echet distances and study their behavior in a variety of simulated outlier detection settings. 
The appeal of local distances in the functional context is illustrated in Figure \ref{fig:Motivating}, where examining local distances $d^{\epsilon,p}_H$ and $d^{\epsilon,p}_F$, $p=2$, $\epsilon\in[0,1]$, highlights shape-outlying curves. In this example, the compactness of the domain $[0,1]$ of the functional data allows to restrict $\epsilon$ to $[0,1]$. 

\begin{figure}[!ht]
    \centerline{\includegraphics[scale=0.45]{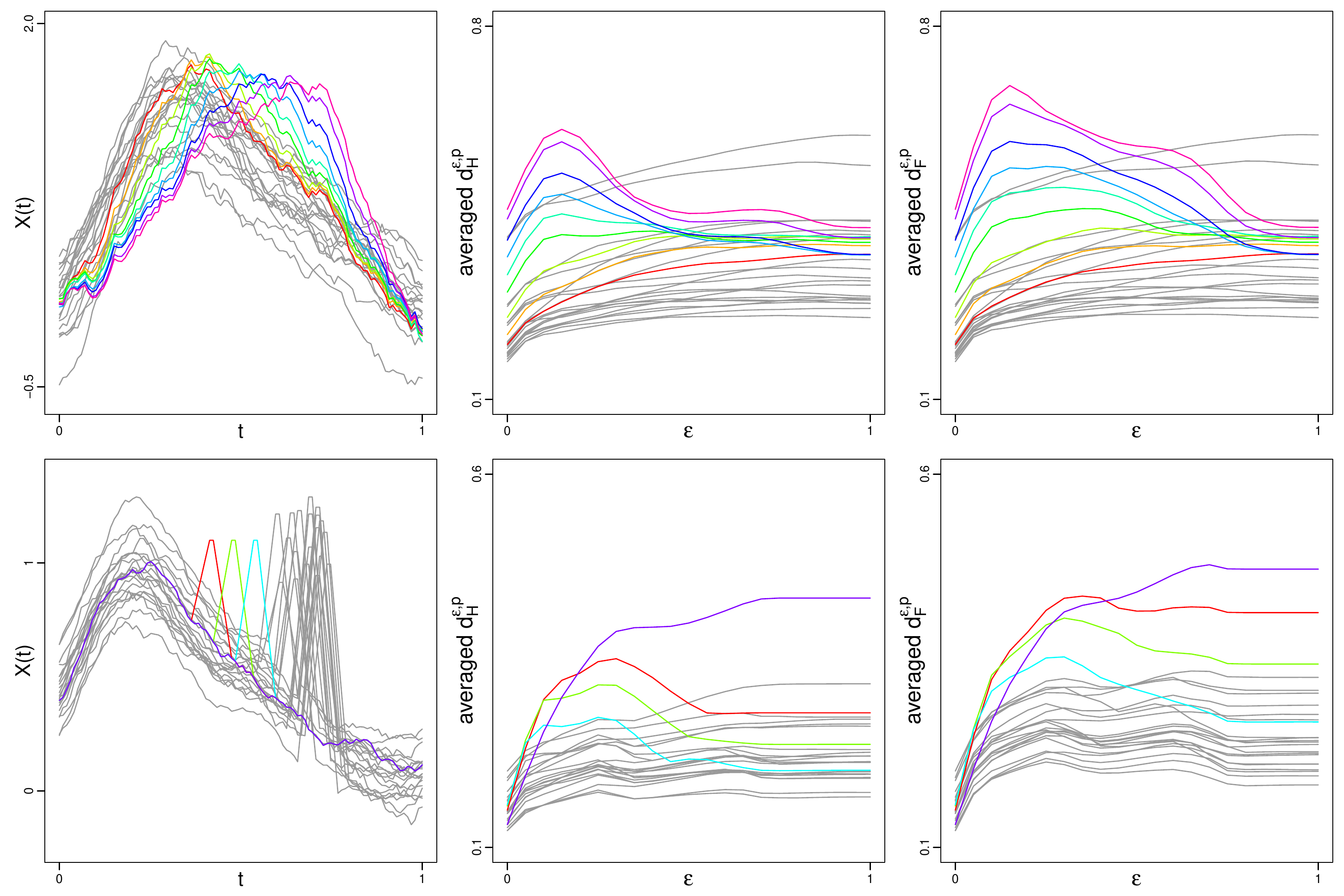}}
    \caption{Motivating example showcasing the behavior of the integrated ball metrics $d_H^{\epsilon,p}$ and $d_F^{\epsilon,p}$ as a function of $\epsilon\in[0,1]$, $p=2$, in two simulated settings with shape-outlying curves (in color).}
    \label{fig:Motivating}
\end{figure}

Figure \ref{fig:Motivating} illustrates two models, drawn in gray in the left panels, with some contaminating observations drawn in color. 
On the top row, the outlying observations differ from the base model in phase, whereas on the bottom, three of the outliers have different timing of the sharp feature and one lacks it entirely. In the middle and right panels, the average $d_H^{\epsilon,p}$ and $d_F^{\epsilon,p}$ distances between the observations and each outlier are drawn in corresponding color, as a function of $\epsilon$, with the between-observation average distances drawn in light gray. 

In the first model, global Hausdorff and Fr\'echet distances (corresponding to $\epsilon=1$) fail to properly separate the outliers from one another and from the rest of the observations. In the second model, the $L_p$ distance (for $\epsilon=0$) does not properly separate outlying curves and fares poorly in detecting them among the observations of the base model. Examining the curves $\epsilon\mapsto d_H^{\epsilon,p}$ and $\epsilon\mapsto d_F^{\epsilon,p}$ shows a different behavior for the outlying curves and allows proper separation on a wide range of values of the locality parameter $\epsilon$.

The rest of the paper is organized as follows: in Section \ref{sec:integrated-ball-metric} we introduce a general functional framework, under which we present the main contribution of the paper, the integrated ball (pseudo)metric $d^{\epsilon,p}$, and analyze its properties. 
We show that, under non-restrictive assumptions, $d^{\epsilon,p}$ is indeed a proper (pseudo)metric with many desirable properties. Furthermore, we provide an important convergence result for $d^{\epsilon,p}$ for discrete approximations of the functions, that ensures the feasibility of computations of the (pseudo)metric in practice. In Section~\ref{sec:examples}, we formally introduce our two leading examples, the Hausdorff and Fr\'echet distances $d_H$ and $d_F$, and define their locally integrated counterparts, $d_H^{\epsilon,p}$ and $d_F^{\epsilon,p}$. We show that $d_H$ and $d_F$ satisfy our main assumptions and therefore $d_H^{\epsilon,p}$ and $d_F^{\epsilon,p}$ exhibit the properties stated in Section~\ref{sec:integrated-ball-metric}. In Section \ref{sec:simulated} we apply the integrated ball Hausdorff and Fr\'echet distances in simulated outlier detection settings, and study their behavior as a function of the locality parameter $\epsilon$. A discussion closes the paper.


\section{Integrated ball metric} \label{sec:integrated-ball-metric}
The framework considered in this paper is the following. Let $(\mathcal{X},d_\mathcal{X})$ and $(\mathcal{V},d_{\mathcal{V}})$ be two metric spaces. Throughout, we assume that $\mathcal{X}$ is a complete and a measure space with respect to a Borel probability measure $\mu$. Let $\mathcal{F}$ be a family of measurable and bounded functions $f:\mathcal{X} \to \mathcal{V}$. Recall that $f$ is bounded if the image $f(\mathcal{X})=\{f(x):x\in\mathcal{X}\}$ is a bounded subset of $\mathcal{V}$. As we consider bounded functions, $\mathcal{F}$ can be equipped with the usual supremum metric given by
$
\sup_{t\in \mathcal{X}}d_{\mathcal{V}}\left(f(t),g(t)\right).
$
The \emph{curve} associated with $f$ is denoted by $C_f$ and is defined as
$$C_f:\mathcal{X}\rightarrow\mathcal{X}\times\mathcal{V}:t\mapsto (t,f(t)).$$
The \emph{graph} of $f$ is the image $G_f=C_f(\mathcal{X})\subset \mathcal{X}\times\mathcal{V}$. 

For a function $f$, the restriction of $f$ to the set $A\subset\mathcal{X}$ is the function $f\restr{A}:{A}\to\mathcal{V}$. In the sequel, by restriction of curve/graph of $f$ to $A$, we mean the curve/graph associated with the restriction of $f$ to $A$. 
Let $B(t,r)=\{x\in\mathcal{X}:d_\mathcal{X}(x,t)\leq r\}\subset\mathcal{X}$ denote the closed ball of radius $r$ centered at $t$. The restriction $f\restr{B(t,\epsilon)}$ will be written $f_{(t,\epsilon)}$ throughout.

In order to take into account shape-sensitive features that are not detected by classical ``vertical'' measures depending only on pointwise distances $d_{\mathcal{V}}(f(t),g(t))$ in the image space $\mathcal{V}$, we consider a general distance $d$ defined on $\mathcal{X} \times \mathcal{V}$. 
More precisely, we assume that $d$ is a pseudometric\footnote{Recall that a mapping $d:\mathcal{B} \times \mathcal{B} \to [0,\infty]$ on general sets $\mathcal{B}$ is a pseudometric if it is symmetric ($d(b_1,b_2)=d(b_2,b_1)$ for all $b_1,b_2\in\mathcal{B}$), satisfies triangle inequality, and $d(b,b) = 0$ for all $b \in \mathcal{B}$. Note that some authors (see, e.g.~\citet{FerratyAndVieu2006}) refer to such a mapping as a semimetric. We will use the term \emph{pseudometric} throughout.} 
defined for subsets of $\mathcal{X} \times \mathcal{V}$ of the form
$$
\mathcal{B}:=\{B(t,r) \times f(B(t,r)) :\ t\in\mathcal{X}, \ r>0,\ f\in\mathcal{F}\}.
$$
Elements of $\mathcal{B}$ are graphs of functions restricted to closed balls. 
\begin{rmk}
We stress that, while the underlying spaces $(\mathcal{X},d_\mathcal{X})$ and $(\mathcal{V},d_{\mathcal{V}})$ are metric, the distance $d$ that we use throughout is only required to be a pseudometric on the product space $\mathcal{X}\times\mathcal{V}$. Note also that care must be taken on the definition of $d$ when one considers which subsets of $\mathcal{X}\times \mathcal{V}$ are accounted for. Indeed, it might be that $d$ provides a pseudometric when restricted to a suitable class of subsets of $\mathcal{X} \times \mathcal{V}$ while it is not if one considers all possible subsets. An example is the Hausdorff distance which is a metric only if restricted to non-empty compact subsets of $\mathcal{X} \times \mathcal{V}$. 
\end{rmk}

Note that assuming that $d$ is defined on sets in $\mathcal{X}\times \mathcal{V}$ is natural. While there exist many definitions computing distances between functions $f\in\mathcal{F}$ ($L_p$ distances) or curves $C_f$ (Fr\'echet distance), they all can be interpreted as (pseudo)metrics on subsets of $\mathcal{X}\times \mathcal{V}$ due to the bijective nature of the mappings $f\mapsto C_f$ and $f\mapsto G_f$. In the sequel, we will use the notation $d(f,g)$, $d(C_f,C_g)$ and $d(G_f,G_g)$ interchangeably.

The running example that is used throughout considers the set of continuous functions $\mathcal{F}=\mathcal{C}([0,1],\R^m)=\{f:[0,1]\rightarrow\mathbb{R}^m,\ f\textrm{ continuous}\}$. 
Euclidean distances are used on both $\mathcal{X}=[0,1]$ and on $\mathcal{V}=\mathbb{R}^m$ and the Lebesgue measure makes the compact interval $[0,1]$ a measure space. 
The canonical metric on $\mathcal{F}$ is induced by the supremum distance. 
Then the pseudometric $d$ can be induced by the Hausdorff or Fr\'echet distances, see Section \ref{sec:examples}.

\subsection{Definition}

We start with introducing a family of local versions of $d$, the integrated ball metrics, which involve integrating distances between restricted functions. 
\begin{defn}[Integrated ball metric]\label{def:integral-ball-metric}
    Let $d$ be a pseudometric on subsets of $\mathcal{X}\times\mathcal{V}$ and let $\mu$ be a Borel probability measure on $\mathcal{X}$. For a given $\epsilon>0$ and $1\leq p\leq \infty$, define the \emph{integrated ball metric} $d^{\epsilon,p}$ between $f,g\in\mathcal{F}$ as
    \begin{equation*}
	d^{\epsilon,p}(f,g) := \begin{cases}
	\left(\int_\mathcal{X} d(f_{(t,\epsilon)},g_{(t,\epsilon)})^pd\mu(t) \right)^{\frac{1}{p}}&\textrm{ for } p<\infty,\\[2mm]
	\sup_{t\in\mathcal{X}}	d(f_{(t,\epsilon)},g_{(t,\epsilon)})& \textrm{ for }p=\infty.
	\end{cases}
	\end{equation*}
\end{defn}

Note that, provided the integrand is measurable, the integral in Definition~\ref{def:integral-ball-metric} exists and takes value in $[0,\infty]$.
The integrated ball metric provides a flexible approach to construct local distances between functions. 
The next section shows that the locality parameter $\epsilon$ allows to consider distances ranging from the $L_p$ distance (for $\epsilon=0$) to $d$ itself (for $\epsilon\rightarrow\infty$). 
Similarly, freedom on the choice in the order $p$ and in the measure $\mu$ provides flexibility towards different applications. 
One can emphasize specific regions of $\mathcal{X}$ through the measure $\mu$, or adjust penalization for large deviations in small regions of space through the choice of $p$.

\subsection{General properties}

We now examine the continuity properties of $d^{\epsilon,p}$ and the assumptions required to ensure it being a (pseudo)metric. Our assumptions are collected below.

\paragraph{Assumptions}
\begin{enumerate}
    \item[(A1)] For $\epsilon>0$ fixed, for each $t\in \mathcal{X}$ it holds $\mu(B(t,\epsilon))>0$.
    \item[(A2)] There exists $K>0$ such that, for all $t\in\mathcal{X}$ and $\epsilon\geq 0$,
    $$d(f_{(t,\epsilon)},g_{(t,\epsilon)}) \leq K \sup\limits_{x\in\mathcal{X}}\ d_\mathcal{V}( f(x),g(x)).$$
    \item[(A3)] For any $t\in\mathcal{X}$, it holds $\lim_{\epsilon\to 0}d(f_{(t,\epsilon)},g_{(t,\epsilon)}) = d_\mathcal{V}( f(t),g(t) ).$
    \item[(A4)] For any $t\in\mathcal{X}$ and for any sequence $\epsilon_n\rightarrow\epsilon$, it holds $d(f_{(t,\epsilon_n)},f_{(t,\epsilon)})\rightarrow 0.$
  	\item[(A5)] For any $\epsilon>0$ and for any sequence $t_n\rightarrow t$, it holds $d(f_{(t_n,\epsilon)},f_{(t,\epsilon)})\rightarrow 0.$
\end{enumerate}

Assumptions (A1)--(A5) above are mild and merely technical. 
Assumption (A1) is required for identifiability of pointwise values of the functions in this general setting and is not even required if one considers functions as equivalence classes $\mu$-almost everywhere. 
(A2) is a regularity condition on $d$ that supposes that two functions $f$ and $g$ cannot be arbitrarily far if their images are close (as measured via $d_{\mathcal{V}}$). 
In the example of continuous functions, this translates to an assumption that diagonal distances between points in the graphs of functions are bounded by a multiple of the vertical distances between functions. 
At the same time, (A2) ensures that the dominated convergence theorem can be applied.  
In conjunction with (A2), assumption (A3) ensures that $\lim_{\epsilon\to 0}d^{\epsilon,p}$ is indeed the $L^p$ distance on $\mathcal{F}$. 
(A4) and (A5) are natural continuity assumptions on $d$. (A4) ensures continuity of $d^{\epsilon,p}$ in $\epsilon$, while (A5) ensures continuity of the integrand in $t$. 

We now state the main result for this section.

\begin{thm}\label{thm:integral-ball-key-properties}
    For any pseudometric $d$ on $\mathcal{B}$, for $1\leq p\leq \infty$ and for $\epsilon> 0$, the integrated ball construction $d^{\epsilon,p}$ produces a pseudometric. Furthermore, $d^{\epsilon,p}$ becomes a true metric if $\mu$ satisfies (A1) and $d$ is, for any $t \in \mathcal{X}$, a metric on $\{B(t,\epsilon)\times f(B(t,\epsilon)):f\in\mathcal{F}\}$.
    Moreover, for any $f,g \in \mathcal{F}$, the following holds.
    \begin{itemize}
         \item[(i)] Under (A2) is the map $d^{\epsilon,p}(\cdot,f)\equiv d^{\epsilon,p}(f,\cdot): \mathcal{F} \to [0,\infty]$ continuous.
        \item[(ii)] Under (A2) and assuming that $\mathcal{X}$ is bounded, $\lim_{\epsilon\rightarrow \infty}d^{\epsilon,p}(f,g)= d(f,g).$
        \item[(iii)] Under (A2) and (A3), 
        $$\lim_{\epsilon\rightarrow 0}d^{\epsilon,p}(f,g)=  
        \begin{cases}
\left(\int_\mathcal{X}  d_\mathcal{V}( f(t),g(t))^p d\mu(t)\right)^{\frac{1}{p}} & \textrm{for }1\leq p<\infty,\\[2mm]
\sup_{t\in\mathcal{X}}d_{\mathcal{V}}(f(t),g(t))&\textrm{for }p=\infty,        	
        \end{cases}
$$ 
that is, $d^{\epsilon,p}(f,g)$ converges to the $L_p$ distance on $\mathcal{F}$ as $\epsilon \to 0$.
        \item[(iv)] Under (A2) and (A4) the mapping $d^{\,\cdot\,,p}:[0,\infty)\rightarrow\mathbb{R}:\epsilon\mapsto d^{\epsilon,p}(f,g)$ is continuous.
    \end{itemize}
\end{thm}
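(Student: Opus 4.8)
The plan is to verify the pseudometric axioms directly, then to derive the metric refinement and the four continuity statements, each of the latter being reduced to an application of the dominated convergence theorem that is made available by (A2). Symmetry of $d^{\epsilon,p}$ and $d^{\epsilon,p}(f,f)=0$ are inherited from $d$ applied for each fixed $t$. For the triangle inequality I would start from the pointwise bound $d(f_{(t,\epsilon)},h_{(t,\epsilon)})\le d(f_{(t,\epsilon)},g_{(t,\epsilon)})+d(g_{(t,\epsilon)},h_{(t,\epsilon)})$ and then take $L^p(\mu)$-norms, invoking Minkowski's inequality (valid in $[0,\infty]$) when $p<\infty$ and suprema when $p=\infty$; measurability of the integrands is assumed throughout, as in Definition~\ref{def:integral-ball-metric}. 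For the metric refinement, suppose $d^{\epsilon,p}(f,g)=0$: for $p<\infty$ this forces $d(f_{(t,\epsilon)},g_{(t,\epsilon)})=0$ for $\mu$-a.e.\ $t$, and for $p=\infty$ for every $t$; since $d$ is assumed a metric on graphs over each ball $B(t,\epsilon)$, this gives $f\equiv g$ on $B(t,\epsilon)$ for $\mu$-a.e.\ (resp.\ every) $t$. To conclude $f=g$, fix $s\in\mathcal{X}$: by (A1), $\mu(B(s,\epsilon))>0$, so $B(s,\epsilon)$ contains some $t$ with $f\equiv g$ on $B(t,\epsilon)$, and $d_\mathcal{X}(s,t)\le\epsilon$ gives $s\in B(t,\epsilon)$, whence $f(s)=g(s)$; here (A1) is used only for $p<\infty$, as $s\in B(s,\epsilon)$ always.

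For (i), under (A2) every value $d^{\epsilon,p}(h_1,h_2)$ is finite, being bounded by $K\sup_x d_\mathcal{V}(h_1(x),h_2(x))<\infty$ since $\mathcal{F}$ consists of bounded functions and $\mu$ is a probability measure; hence the reverse triangle inequality applies and yields $|d^{\epsilon,p}(g_n,f)-d^{\epsilon,p}(g,f)|\le d^{\epsilon,p}(g_n,g)\le K\sup_x d_\mathcal{V}(g_n(x),g(x))$, which tends to $0$ whenever $g_n\to g$ in the supremum metric. For (ii), if $\mathrm{diam}(\mathcal{X})=D<\infty$ then $B(t,\epsilon)=\mathcal{X}$ for every $t$ as soon as $\epsilon\ge D$, so $f_{(t,\epsilon)}=f$ and $g_{(t,\epsilon)}=g$, and consequently $d^{\epsilon,p}(f,g)=d(f,g)$ exactly (using $\mu(\mathcal{X})=1$); thus $\epsilon\mapsto d^{\epsilon,p}(f,g)$ is eventually constant, and (A2) guarantees that its limiting value $d(f,g)$ is finite.

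For (iii) and (iv) I would take $p<\infty$ (the case $p=\infty$ being analogous, with suprema in place of integrals) and consider a sequence $\epsilon_n\to\epsilon_0$ with $\epsilon_0\in[0,\infty)$; put $h_n(t):=d(f_{(t,\epsilon_n)},g_{(t,\epsilon_n)})$. By (A2), $h_n(t)\le M:=K\sup_x d_\mathcal{V}(f(x),g(x))$ uniformly in $n$ and $t$, and the constant $M$ is $\mu$-integrable. For (iii), where $\epsilon_0=0$, assumption (A3) gives $h_n(t)\to d_\mathcal{V}(f(t),g(t))$ pointwise; for (iv), two applications of the triangle inequality for $d$ give $|h_n(t)-h_0(t)|\le d(f_{(t,\epsilon_n)},f_{(t,\epsilon_0)})+d(g_{(t,\epsilon_n)},g_{(t,\epsilon_0)})$, which tends to $0$ by (A4), so $h_n\to h_0$ pointwise. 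In both cases dominated convergence shows that $\int h_n^p\,d\mu$ converges to the integral of the $p$-th power of the pointwise limit, and extracting $p$-th roots (a continuous operation on $[0,\infty)$) yields the stated limits, i.e.\ the $L_p$ distance for (iii) and $d^{\epsilon_0,p}(f,g)$ for (iv).

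The one genuinely delicate point is the domination in (iv). It is tempting to try to bound $d(f_{(t,\epsilon_n)},f_{(t,\epsilon_0)})$ uniformly in $t$, but (A2) does not deliver such a bound, as it compares two functions over the \emph{same} ball rather than one function over two different balls. The clean route, adopted above, is to apply dominated convergence to the integrand $h_n$ itself --- which \emph{is} uniformly bounded by (A2) --- and to use (A4) solely to obtain \emph{pointwise} convergence of $h_n$ via the triangle inequality, never integrating the quantity $d(f_{(t,\epsilon_n)},f_{(t,\epsilon_0)})$ at all.
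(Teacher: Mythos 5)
Your proof is correct and follows essentially the same route as the paper's: pseudometric axioms via Minkowski's inequality, the metric property from (A1) combined with the symmetry $s\in B(t,\epsilon)\Leftrightarrow t\in B(s,\epsilon)$, (i) from the (A2) bound plus the reverse triangle inequality, (ii) from $B(t,\epsilon)=\mathcal{X}$ once $\epsilon$ exceeds the diameter, and (iii)--(iv) from dominated convergence with the constant bound supplied by (A2) and pointwise convergence supplied by (A3), resp.\ by (A4) through the two-sided triangle-inequality estimate. Your closing observation --- that one dominates the integrand $h_n$ itself via (A2) and uses (A4) only for pointwise convergence, never integrating $d(f_{(t,\epsilon_n)},f_{(t,\epsilon_0)})$ --- is exactly how the paper's Step 4 is organized.
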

We show in Section \ref{sec:examples} that the Hausdorff and Fr\'echet distances satisfy~(A1)--(A5) and consequently, Theorems~\ref{thm:integral-ball-key-properties} and \ref{thm:discretisation-convergence} (below) are applicable. 
The integrated ball distance construction appears to be the first instance of a local version of the Fr\'echet distance that provides a continuous metric. This is also the case for the Hausdorff distance in the context of functional data.  

\begin{rmk}
	The boundedness assumption on $\mathcal{X}$ is only a sufficient condition for $(ii)$ of Theorem~\ref{thm:integral-ball-key-properties}. Its proof shows that the statement remains valid more generally provided that $d(f_{(t,\epsilon)},g_{(t,\epsilon)}) \to d(f,g)$ as $\epsilon \to \infty$, under further  technical assumptions. 
\end{rmk}

\begin{proof}[Proof of Theorem \ref{thm:integral-ball-key-properties}]
We prove the statements one by one and divide the proof into four steps.\\

\noindent\textbf{Step 1a: $d^{\epsilon,p}$ is a pseudometric.}\\
Since $d$ is a pseudometric on $\mathcal{B}$ and $f_{(t,\epsilon)},g_{(t,\epsilon)}\in \mathcal{B}$ for every $t\in\mathcal{X}$ and $\epsilon>0$, it follows trivially that $d^{\epsilon,p}(f,g)$ is symmetric, non-negative, and satisfies $d^{\epsilon,p}(f,f) =0$ for all $f \in \mathcal{F}$. 
In order to prove the triangle inequality for $d^{\epsilon,p}$, we observe from the triangle inequality for $d(f_{(t,\epsilon)},g_{(t,\epsilon)})$ that, for all $t\in \mathcal{X}, \epsilon>0$, and $h \in \mathcal{F}$, we have
	\begin{equation*}
	d(f\restrcB,g\restrcB) \leq d(f\restrcB,h\restrcB)+d(h\restrcB,g\restrcB).
	\end{equation*}
The claim for $p=1$ now follows by integration, and the general case $p>1$ from Minkowski's inequality. This verifies the triangle inequality, and hence $d^{\epsilon,p}$ is a pseudometric. \\

\noindent\textbf{Step 1b: $d^{\epsilon,p}$ is a metric.}\\
Under the further assumption (A1) and the hypothesis that $d$ is a metric, suppose that $d^{\epsilon,p}(f,g) = 0$, i.e. 
$$\int_\mathcal{X} d(f\restrcB,g\restrcB)^pd\mu(t) = 0.$$ It follows that
	$$
d(f\restrcB,g\restrcB) = 0 \hspace{0.5cm} \text{for }\mu-\text{almost every }t\in\mathcal{X},
	$$
	and since $d$ is a metric on $\left\{B(t,\epsilon)\times f(B(t,\epsilon))\colon f\in\mathcal{F}\right\}$ for every $t\in\mathcal{X}$, we have 
	\begin{equation*}
		f(s) =g(s),\quad \forall s \in {B(t,\epsilon)} 
	\end{equation*}
	for $\mu$-a.e. $t\in \mathcal{X}$. Let $A$ be the set of points $t$ satisfying the last formula. It holds $\mu(A^c)=0$. 
	This implies that $f(s)=g(s)$ for all $s\in\mathcal{X}$. Indeed, if that was not the case, then there exists $s$ such that $B(s,\epsilon)\subset A^c$, which contradicts (A1). This concludes step 1. \\
	
	
\noindent\textbf{Step 2: Proof of statement (i).}\\
	For simplicity of notation, we prove only the case $p=1$, while the general case can be proved similarly.
	Using (A2), we get
$$
	d^{\epsilon,p}(f,g) = \int_\mathcal{X} d(f\restrcB,g\restrcB)d\mu(t)\leq K\sup_{s\in\mathcal{X}} d_\mathcal{V}( f(s), g(s) ).
$$
Now, let $f_n$ be a sequence of functions such that $\sup_{t \in \mathcal{X}} d_{\mathcal{V}}(f_n(t),f(t)) \to 0$ as $n\rightarrow \infty$. Then, by the reverse triangle inequality, it holds that
$$
 |d^{\epsilon,p}(f_n,g)-d^{\epsilon,p}(f,g)| \leq d^{\epsilon,p}(f,f_n) \leq K\sup_{s\in \mathcal{X}}  d_\mathcal{V}( f(s), f_n(s) )\rightarrow 0.
$$
	The continuity in $g$ can be proved similarly, concluding step 2.\\

\noindent\textbf{Step 3: Proof of statements (ii) and (iii).}\\
By observing that (A2) allows us to use the dominated convergence theorem, statement (iii) follows directly from (A3). Similarly for statement (ii), boundedness of $\mathcal{X}$ gives $d(f\restrcB,g\restrcB) = d(f\restr{\mathcal{X}},g\restr{\mathcal{X}})=d(f,g)$ for $\epsilon$ large enough, from which the claim follows trivially.\\

\noindent\textbf{Step 4: Proof of statement (iv).}\\
	Let $t$ be fixed. By the reverse triangle inequality we get
		$$
	\left|d(f\restrcB, g_{(t,\epsilon+\delta)})-d(f_{(t,\epsilon+\delta)},g_{(t,\epsilon+\delta)})\right| \leq d(f_{(t,\epsilon+\delta)}, f\restrcB).
	$$
	Treating $g$ similarly by symmetry, we get 
$$\left|d(f_{(t,\epsilon+\delta)}, g_{(t,\epsilon+\delta)})-d(f\restrcB, g\restrcB)\right|\leq d(f_{(t,\epsilon+\delta)}, f\restrcB)+d(g_{(t,\epsilon+\delta)}, g\restrcB).
$$
	The claim follows directly from (A4) by using the dominated convergence theorem. This concludes step 4 and the whole proof.
\end{proof}


\subsection{Computation}

In practice, functions are only observed on a discrete grid, so that it is impossible to compute the exact value of the integrated ball distance between two data points $f$ and $g$. In this section, we construct approximations $d^{\epsilon,p}_n(f,g)$ of $d^{\epsilon,p}(f,g)$ that are consistent in $n$.

More precisely, let $\mu_n$ be a sequence of probability measures converging weakly to $\mu$. Let $d_n$ be a sequence of pseudometrics, which, for any $f$ and $g$ provides a sequence $d_n(f_{(t,\epsilon)},g_{(t,\epsilon)})$ of approximations of $d(f_{(t,\epsilon)},g_{(t,\epsilon)})$. In the practical setting, $\mu_n$ corresponds to a discrete measure on the observational grid. The pseudometrics $d_n$ are discrete approximations of $d$ based on the observed data points. The practical construction of $d_n$ depends on the underlying $d$ and specific examples are deferred to Section \ref{sec:simulated}.

The following result ensures the consistency of suitable discretizations in practice. 
\begin{thm}
\label{thm:discretisation-convergence}
For $\epsilon> 0$ and $1\leq p\leq \infty$, suppose that (A2) and (A5) hold, that $\mu_n$ converges weakly to $\mu$ and that, as $n\to \infty$,
\begin{equation}
    \label{eq:discretisation-uniform-conv}
\sup_{t\in \mathcal{X}} \left|d_{n}(f_{(t,\epsilon)},g_{(t,\epsilon)}) - d(f_{(t,\epsilon)},g_{(t,\epsilon)})\right| \to 0.
\end{equation}
Then, defining $d_n^{\epsilon,p}(f,g)$ as $d^{\epsilon,p}(f,g)$ with $\mu=\mu_n$ and $d=d_n$, it holds that
$$
d_{n}^{\epsilon,p}(f,g) \to d^{\epsilon,p}(f,g).
$$
\end{thm}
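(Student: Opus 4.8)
The plan is to treat the cases $p=\infty$ and $1\le p<\infty$ separately. The case $p=\infty$ is immediate: since
$$
\Bigl|\sup_{t\in\mathcal{X}}d_n(f_{(t,\epsilon)},g_{(t,\epsilon)})-\sup_{t\in\mathcal{X}}d(f_{(t,\epsilon)},g_{(t,\epsilon)})\Bigr|\le \sup_{t\in\mathcal{X}}\bigl|d_n(f_{(t,\epsilon)},g_{(t,\epsilon)})-d(f_{(t,\epsilon)},g_{(t,\epsilon)})\bigr|,
$$
the conclusion $d_n^{\epsilon,\infty}(f,g)\to d^{\epsilon,\infty}(f,g)$ follows at once from \eqref{eq:discretisation-uniform-conv}; here neither the weak convergence of $\mu_n$ nor (A5) is needed.

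For $1\le p<\infty$ I would abbreviate $\phi(t)=d(f_{(t,\epsilon)},g_{(t,\epsilon)})$ and $\phi_n(t)=d_n(f_{(t,\epsilon)},g_{(t,\epsilon)})$, so that $d^{\epsilon,p}(f,g)=\bigl(\int_{\mathcal{X}}\phi^p\,d\mu\bigr)^{1/p}$ and $d_n^{\epsilon,p}(f,g)=\bigl(\int_{\mathcal{X}}\phi_n^p\,d\mu_n\bigr)^{1/p}$. The first step is to record that $\phi$ is a bounded, continuous function on $\mathcal{X}$: boundedness follows from (A2) together with the boundedness of $f$ and $g$, while continuity follows from (A5) and the reverse triangle inequality for $d$, since for any $t_n\to t$,
$$
\bigl|\phi(t_n)-\phi(t)\bigr|\le d(f_{(t_n,\epsilon)},f_{(t,\epsilon)})+d(g_{(t_n,\epsilon)},g_{(t,\epsilon)})\longrightarrow 0.
$$
Hence $\phi^p$ is bounded and continuous as well, and by \eqref{eq:discretisation-uniform-conv} the functions $\phi_n$ are bounded, and their $L^p(\mu_n)$ norms finite, for all $n$ large enough.

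The core estimate then routes the error through the intermediate quantity $\bigl(\int_{\mathcal{X}}\phi^p\,d\mu_n\bigr)^{1/p}$, decoupling the moving integrand from the moving measure:
$$
\Bigl|d_n^{\epsilon,p}(f,g)-d^{\epsilon,p}(f,g)\Bigr|\le \Bigl|\Bigl(\int\phi_n^p\,d\mu_n\Bigr)^{1/p}-\Bigl(\int\phi^p\,d\mu_n\Bigr)^{1/p}\Bigr|+\Bigl|\Bigl(\int\phi^p\,d\mu_n\Bigr)^{1/p}-\Bigl(\int\phi^p\,d\mu\Bigr)^{1/p}\Bigr|.
$$
For the first term, Minkowski's inequality in $L^p(\mathcal{X},\mu_n)$ bounds it by $\bigl(\int|\phi_n-\phi|^p\,d\mu_n\bigr)^{1/p}\le \sup_{t\in\mathcal{X}}|\phi_n(t)-\phi(t)|$, using that $\mu_n$ is a probability measure, and this tends to $0$ by \eqref{eq:discretisation-uniform-conv}. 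For the second term, the weak convergence of $\mu_n$ to $\mu$ applied to the bounded continuous function $\phi^p$ yields $\int_{\mathcal{X}}\phi^p\,d\mu_n\to\int_{\mathcal{X}}\phi^p\,d\mu$, and taking $p$-th roots (a continuous operation on $[0,\infty)$) gives convergence of the corresponding norms. Combining the two bounds settles the case $1\le p<\infty$, which together with the $p=\infty$ case proves the theorem.

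The only genuinely delicate point is that two limiting procedures act simultaneously — the discretized distance $d_n$ and the discretized measure $\mu_n$ — and the remedy is exactly the insertion of $\bigl(\int_{\mathcal{X}}\phi^p\,d\mu_n\bigr)^{1/p}$ above, which reduces the problem to the uniform approximation \eqref{eq:discretisation-uniform-conv} and to a plain weak-convergence statement for the single bounded continuous function $\phi^p$. Establishing continuity of $\phi$ from (A5) (and boundedness from (A2)) is routine but is precisely what legitimizes the weak-convergence step; measurability of the $\phi_n$, needed for $d_n^{\epsilon,p}$ to be well defined, is part of the standing convention attached to Definition~\ref{def:integral-ball-metric}.
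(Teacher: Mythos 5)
Your proof is correct and takes essentially the same route as the paper: you decouple the two limits by inserting the intermediate quantity $\bigl(\int_{\mathcal{X}}\phi^p\,d\mu_n\bigr)^{1/p}$, handle the metric-approximation part by the uniform convergence hypothesis \eqref{eq:discretisation-uniform-conv}, and handle the measure part by weak convergence applied to the bounded continuous integrand, whose continuity and boundedness come from (A5) and (A2) exactly as in the paper's appeal to Step 4 of Theorem~\ref{thm:integral-ball-key-properties}. The only cosmetic difference is that you compare $L^p(\mu_n)$ norms via Minkowski whereas the paper compares the $p$-th powers directly; both come down to the same two estimates.
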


\begin{proof}[Proof of Theorem \ref{thm:discretisation-convergence}]
We consider $p<\infty$; the case $p=\infty$ is trivial. We have
\begin{align*}
\left[d_{n}^{\epsilon,p}(f,g)\right]^p
=& \int_{\mathcal{X}} \Big[ d_{n}\left(f_{(t,\epsilon)},g_{(t,\epsilon)}\right)^p-d\left(f_{(t,\epsilon)},g_{(t,\epsilon)}\right)^p \Big]d\mu_n(t)\\
&\qquad +\int_{\mathcal{X}} d\left(f_{(t,\epsilon)},g_{(t,\epsilon)}\right)^p d\mu_n(t).
\end{align*}
Using the arguments of step 4 in the proof of Theorem \ref{thm:integral-ball-key-properties} together with (A5) shows that, for fixed $f,g\in \mathcal{F}$, 
the mapping $t\mapsto d(f_{(t,\epsilon)},g_{(t,\epsilon)})$ is continuous and, by (A2), bounded. Hence, by definition of weak convergence \cite{Billingsley1999}, 
$$
\int_{\mathcal{X}} d\left(f_{(t,\epsilon)},g_{(t,\epsilon)}\right)^p d\mu_n(t)  \to \int_{\mathcal{X}} d\left(f_{(t,\epsilon)},g_{(t,\epsilon)}\right)^p d\mu(t).
$$
Furthermore, by our assumption \eqref{eq:discretisation-uniform-conv} it holds that
$$
\begin{aligned}
\left|\int_{\mathcal{X}} d_{n}\left(f_{(t,\epsilon)},g_{(t,\epsilon)}\right)^p \right. & - d\left(f_{(t,\epsilon)},g_{(t,\epsilon)}\right)^p  d\mu_n(t)\bigg\vert \\
& \leq \ \sup_{t\in \mathcal{X}}\left|d_{n}\left(f_{(t,\epsilon)},g_{(t,\epsilon)}\right)^p-d\left(f_{(t,\epsilon)},g_{(t,\epsilon)}\right)^p\right|\to 0,
\end{aligned}
$$
and hence $\left[d_{n}^{\epsilon,p}(f,g)\right]^p \to \left[d^{\epsilon,p}(f,g)\right]^p$ which concludes the proof.
\end{proof}

\begin{rmk}
The proof of Theorem~\ref{thm:discretisation-convergence} can be generalized to incorporate measurement errors in $f$ and $g$. More precisely, the proof holds under the assumption 
$$\sup_{t\in \mathcal{X}} \left|d_{n}(f_{n,(t,\epsilon)},g_{n,(t,\epsilon)}) - d(f_{(t,\epsilon)},g_{(t,\epsilon)})\right| \to 0,$$
where $f_n$ and $g_n$ are approximations of $f$ and $g$ with measurement errors. 
\end{rmk}


\section{Examples} \label{sec:examples}

In the following, we study the more specific examples of integrated Hausdorff and Fr\'echet distances. 
Let $(\mathcal{X},\mu)=([0,1],\lambda)$ with $\lambda$ the Lebesgue measure and let $(\mathcal{V}, d_\mathcal{V}) = (\R^m,d_\mathcal{V})$ with $d_\mathcal{V}$ the Euclidean distance. 
The definitions of Hausdorff and Fr\'echet distances require the use of a metric between points on curves, i.e. on $[0,1]\times\mathbb{R}^m$. We endow this space with the usual $q$-metric 
\begin{equation} \label{eq:q-metric}
    d_\Cur((t,v), (s,w)) := 
    \begin{cases}
\left(|t-s|^q + d_\mathcal{V}( v,w )^q \right)^\frac{1}{q}& \textrm{for }1\leq q<\infty,\\[2mm]     	
\max\left\{|t-s|, d_\mathcal{V}( v,w ) \right\} & \textrm{for }q=\infty.
    \end{cases}
\end{equation}
Choosing $q=2$ leads to $d_\Cur$ becoming the metric induced by the $L_2$ norm on $\R^{m+1}$. 
For the sake of simplicity, in the sequel we restrict to $\mathcal{C}([0,1],\mathbb{R}^m)$, the space of continuous functions $[0,1]\to\mathbb{R}^m$. 
The continuity ensures compactness of $f([0,1])$. This in turn allows us to restrict our considerations to non-empty compact subsets of $[0,1]\times \mathbb{R}^m$, for which the Hausdorff distance is a metric. \\

Definition~\ref{def:integral-ball-metric} allows to turn any distance defined on restricted functions to a local integrated version. We start by providing such a distance between restrictions in the Hausdorff case.

\begin{defn}[Hausdorff for restrictions]
    \label{def:Hausdorff}
    Let $f\restr{A}$ and $g\restr{B}$ be the restrictions of the functions $f,g\in\mathcal{C}([0,1],\mathbb{R}^m)$ to sets $A$ and $B$, respectively. Then, the \emph{Hausdorff distance $d_H$ for the restrictions} is given by
    \begin{align*}
    d_H(f\restr{A},g\restr{B}) :=\max\{ \sup_{t\in A}\inf_{s\in B} d_\Cur(\Cur_f(t), \Cur_g(s)),\ \sup_{s\in B}\inf_{t\in A}d_\Cur(\Cur_f(t), \Cur_g(s))\}.
    \end{align*}
\end{defn}

The integrated version of the Hausdorff distance follows.
\begin{defn}[Integrated ball Hausdorff distance]
	For a given $\epsilon\geq 0$, $1\leq p\leq \infty$ and $f,g\in\mathcal{C}([0,1],\mathbb{R}^m)$, we define
	\begin{equation*}
	d_H^{\epsilon,p}(f,g) := 
	\begin{cases}
\left(\int_0^1 d_H(f\restrcB,g\restrcB)^pdt \right)^{\frac{1}{p}} & \textrm{for }1\leq p<\infty,\\[2mm]
	\sup_{t\in[0,1]} d_H(f\restrcB,g\restrcB)	&\textrm{for }p=\infty.
	\end{cases}
	\end{equation*}
\end{defn}



The Fr\'echet distance is defined using parametrizations of the curves of the functions. In the case of the Fr\'echet distance, care must therefore be taken in how the restrictions of the curves are handled. Recall the usual one-sided definition of the Fr\'echet distance from~\citet{Buchin2007}.

\begin{defn}[One-sided Fr\'echet]\label{def:oneSidedFrechet}
	Let $\Phi$ be the set of all continuous monotonically increasing functions $\phi:[0,1]\mapsto[0,1]$ with $\phi(0)=0$ and $\phi(1)=1$. Then the \emph{one-sided Fr\'echet distance} $d_F$ between the functions $f,g\in\mathcal{C}([0,1],\mathbb{R}^m)$ is given by
	$$
	d_F(f,g):=\inf_{\phi\in\Phi} \sup_{t\in[0,1]} d_\Cur(\Cur_f(t),\Cur_g(\phi(t))).
	$$
\end{defn}

In order to define a restricted version of this distance satisfying (A4) and (A5), one needs to compare functions defined on sets that do not necessarily overlap. 
To do so, the two domains are mapped to a shared interval, to ensure that both curve segments can be simultaneously traversed through a single parameter. 
Therefore, for $0\leq a\leq b\leq 1$, we denote
\begin{equation*} 
    \tilde f_{[a,b]}:[0,1]\to \mathcal{V}:t\mapsto f(\theta_{[a,b]}(t)),
\end{equation*}
where $\theta_{[a,b]}:[0,1]\to [a,b]:t\mapsto a+(b-a)t$. This construction will be typically applied to function $f$ restricted to the interval $[a,b] \subset [0,1]$; the function $\tilde f_{[a,b]}$ is then a re-parametrization of $f\restr{[a,b]}$ to the full domain $[0,1]$. Evidently, $\tilde f_{[0,1]} = f$. We also denote $\tilde f_{[a,b]}$ for $[a,b] = B(t,\epsilon)$ by the simpler $\tilde f_{(t,\epsilon)}$. 
This leads us to a slightly expanded definition of the Fr\'echet distance that is able to compare curve segments restricted to different intervals.

\begin{defn}[One-sided Fr\'echet for restrictions]
    \label{def:Frechet-differentRestrictions}
     Let $f\restr{[a,b]}$ and $g\restr{[c,d]}$ be the restrictions of the functions $f, g\in\mathcal{C}([0,1],\mathbb{R}^m)$ to the respective subintervals $[a,b]$ and $[c,d]$ of $[0,1]$. Then, the \emph{one-sided Fr\'echet distance $d_F$ for restrictions} is given by
    \begin{equation*}
        d_F(f\restr{[a,b]},g\restr{[c,d]}) := d_F(\tilde f_{[a,b]}, \tilde g_{[c,d]}).
    \end{equation*}
\end{defn}
In the case $[a,b]=[c,d]=[0,1]$, Definitions~\ref{def:Frechet-differentRestrictions} and~\ref{def:oneSidedFrechet} coincide. 
With this definition of the Fr\'echet distance, the integrated ball Fr\'echet distance can now be defined.
\begin{defn}[Integrated ball Fr\'echet distance]
	For $\epsilon\geq 0$, $1\leq p\leq \infty$ and $f,g\in\mathcal{C}([0,1],\mathbb{R}^m)$, define
	\begin{equation*}
	d_F^{\epsilon,p}(f,g) := \begin{cases}
 	\left(\int_0^1 d_F(f_{(t,\epsilon)},g_{(t,\epsilon)})^pdt \right)^{\frac{1}{p}}&\textrm{for }1\leq p<\infty,\\[2mm]
 	\sup_{t\in[0,1]} d_F(f\restrcB,g\restrcB)&\textrm{for }p=\infty.
 \end{cases}
	\end{equation*}
\end{defn}
Both $d_H^{\epsilon,p}$ and $d_F^{\epsilon,p}$ inherit the properties of Theorems \ref{thm:integral-ball-key-properties} and \ref{thm:discretisation-convergence}.
\begin{thm} \label{prop:integral-Hausdorff-Frechet-properties}
    $d_H$ and $d_F$ are metrics that fulfill (A1)--(A5). Thus, for any $\epsilon\geq 0$ and $1\leq p\leq\infty$, $d_H^{\epsilon,p}$ and $d_F^{\epsilon,p}$ are metrics that satisfy all conclusions of Theorem \ref{thm:integral-ball-key-properties} and Theorem \ref{thm:discretisation-convergence}.
\end{thm}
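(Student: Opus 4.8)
The plan is to obtain the second assertion as an immediate corollary of the first: once $d_H$ and $d_F$ are known to be metrics on the relevant restriction classes and to satisfy (A1)--(A5), Theorems~\ref{thm:integral-ball-key-properties} and~\ref{thm:discretisation-convergence} apply directly for every $\epsilon>0$, and the boundary case $\epsilon=0$ is trivial, since $B(t,0)=\{t\}$ gives $d_H(f_{(t,0)},g_{(t,0)})=d_F(f_{(t,0)},g_{(t,0)})=d_\mathcal{V}(f(t),g(t))$, so that $d_H^{0,p}=d_F^{0,p}$ is simply the $L_p$ metric. Hence the whole task reduces to verifying, for $d\in\{d_H,d_F\}$: (a) for every $t\in[0,1]$ and $\epsilon>0$, $d$ is a metric on $\{B(t,\epsilon)\times h(B(t,\epsilon)):h\in\mathcal{C}([0,1],\mathbb{R}^m)\}$, and (b) assumptions (A1)--(A5). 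I would present (a) first, then (A1) (immediate, since $\lambda(B(t,\epsilon)\cap[0,1])>0$), then (A2)--(A5) grouped together, since they all reduce to uniform continuity of $f,g$ on the compact interval $[0,1]$.

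For (a) with $d=d_H$ I would invoke the classical fact that the Hausdorff distance is a metric on the family of non-empty compact subsets of $([0,1]\times\mathbb{R}^m,d_\Cur)$; continuity of $h$ makes each restricted graph compact, and $d_H(f_{(t,\epsilon)},g_{(t,\epsilon)})=0$ forces the two restricted graphs to coincide, hence $f=g$ on $B(t,\epsilon)$ because the projection of a graph onto $[0,1]$ is injective. For $d=d_F$ the one point requiring genuine care is that the one-sided Fréchet distance of Definition~\ref{def:oneSidedFrechet} is a true metric on $\mathcal{C}([0,1],\mathbb{R}^m)$, not merely a pseudometric: symmetry follows from the substitution $u=\phi(t)$ together with the standard fact that the infimum over $\Phi$ equals the infimum over strictly increasing homeomorphisms of $[0,1]$, a family closed under inversion; the triangle inequality follows by composing reparametrizations and using surjectivity; and $d_F(f,g)=0\Rightarrow f=g$ because $d_\Cur((t,f(t)),(\phi(t),g(\phi(t))))\ge|t-\phi(t)|$, so along a minimizing sequence $\phi_n\to\mathrm{id}$ uniformly and then $g\circ\phi_n\to g$ uniformly by uniform continuity of $g$, forcing $f=g$. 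Finally, since $B(t,\epsilon)$ does not depend on the function, Definition~\ref{def:Frechet-differentRestrictions} gives $d_F(f_{(t,\epsilon)},g_{(t,\epsilon)})=d_F(\tilde f_{(t,\epsilon)},\tilde g_{(t,\epsilon)})=d_F(f\circ\theta,g\circ\theta)$ with $\theta=\theta_{B(t,\epsilon)\cap[0,1]}$ a fixed affine homeomorphism onto its image, so $d_F$ on the restriction class is the pullback of the metric just discussed along the bijection $h_{(t,\epsilon)}\mapsto h\circ\theta$, hence a metric.

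For (A2)--(A5) I would use throughout that $f,g$ are uniformly continuous on $[0,1]$, with moduli of continuity $\omega_f,\omega_g$, and that $\theta_{[a,b]}$ is affine with image $[a,b]$. For (A2), choosing $\phi=\mathrm{id}$ in the Fréchet infimum (resp.\ the diagonal matching in the Hausdorff double supremum) and using $d_\Cur((s,v),(s,w))=d_\mathcal{V}(v,w)$ gives $d(f_{(t,\epsilon)},g_{(t,\epsilon)})\le\sup_{x\in[0,1]}d_\mathcal{V}(f(x),g(x))$ for all $t$ and $\epsilon\ge 0$, so $K=1$ works. For (A3), as $\epsilon\to 0$ the interval $B(t,\epsilon)\cap[0,1]$ shrinks to $\{t\}$; the same diagonal bound gives $\limsup_{\epsilon\to 0}d(f_{(t,\epsilon)},g_{(t,\epsilon)})\le\sup_{|x-t|\le\epsilon,\,x\in[0,1]}d_\mathcal{V}(f(x),g(x))\to d_\mathcal{V}(f(t),g(t))$, while the reverse triangle inequality in $\mathcal{V}$ (combined, for $d_H$, with the inner infimum taken from $C_f(t)$, and for $d_F$ with $d_\Cur\ge d_\mathcal{V}$ on second coordinates and the fact that every matched point lies in $B(t,\epsilon)\cap[0,1]$) yields the matching lower bound $d(f_{(t,\epsilon)},g_{(t,\epsilon)})\ge d_\mathcal{V}(f(t),g(t))-\omega_f(\epsilon)-\omega_g(\epsilon)$, and the two squeeze. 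For (A4) and (A5) the two functions are the \emph{same} $f$ and only the ball moves; the endpoints of $B(\cdot,\cdot)\cap[0,1]$ are $1$-Lipschitz in both centre and radius, so $\theta_{B(t,\epsilon_n)}\to\theta_{B(t,\epsilon)}$ (resp.\ $\theta_{B(t_n,\epsilon)}\to\theta_{B(t,\epsilon)}$) uniformly on $[0,1]$ with error at most $|\epsilon_n-\epsilon|$ (resp.\ $|t_n-t|$); then for $d_F$ the matching $\phi=\mathrm{id}$ gives $d_F(f_{(t,\epsilon_n)},f_{(t,\epsilon)})\le\omega_f(|\epsilon_n-\epsilon|)\to 0$, and for $d_H$ the Hausdorff distance between the two interval domains is at most $|\epsilon_n-\epsilon|$, so projecting each point of one graph to the nearest abscissa in the other domain and using $\omega_f$ gives $d_H(f_{(t,\epsilon_n)},f_{(t,\epsilon)})\le(|\epsilon_n-\epsilon|^q+\omega_f(|\epsilon_n-\epsilon|)^q)^{1/q}\to 0$ (and likewise for (A5) with $|t_n-t|$). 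With (a) and (A1)--(A5) in hand, Theorems~\ref{thm:integral-ball-key-properties} and~\ref{thm:discretisation-convergence} finish the proof.

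I expect the only step requiring real care rather than routine bookkeeping to be the verification that the one-sided Fréchet distance is a genuine metric: its symmetry, and the equality of the infima over $\Phi$ and over homeomorphisms, are the nonformal ingredients, and the implication $d_F(f,g)=0\Rightarrow f=g$ must be argued explicitly --- this is precisely where retaining the time coordinate in $C_f$ matters, the Fréchet distance being only a metric modulo reparametrization on the bare images $f([0,1])$. All the remaining items --- (A1), the constant $K=1$ in (A2), the squeeze in (A3), and the modulus-of-continuity estimates in (A4)--(A5) --- are routine once the reparametrization $\theta_{[a,b]}$ of Definition~\ref{def:Frechet-differentRestrictions} is carefully tracked.
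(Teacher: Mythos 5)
Your proposal is correct and follows essentially the same route as the paper's proof: the constant $K=1$ in (A2) via the identity/diagonal matching, the squeeze argument for (A3) using moduli of continuity, and the affine-reparametrization bound $\vert\theta_{[a,b]}(s)-\theta_{[c,d]}(s)\vert\le\max\{\vert a-c\vert,\vert b-d\vert\}$ for (A4)--(A5) are exactly the paper's steps. The only differences are that you spell out the verification that the one-sided Fr\'echet distance is a genuine metric (symmetry via homeomorphism approximation, separation via $d_\Cur\ge\vert t-\phi(t)\vert$), which the paper simply asserts, and you treat the boundary case $\epsilon=0$ explicitly --- both harmless additions.
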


\begin{proof}[Proof of Theorem \ref{prop:integral-Hausdorff-Frechet-properties}]
    We check that the Hausdorff and Fr\'echet distances, $d_H$ and $d_F$, satisfy assumptions (A1)--(A5) in our chosen setting. Then, the statements of Theorem \ref{thm:integral-ball-key-properties} and Theorem \ref{thm:discretisation-convergence} follow directly. \\
    
\noindent\textbf{The Hausdorff case:}\\
Note that the graphs of continuous functions $f\in\mathcal{C}([0,1],\R^m)$ are compact sets in $[0,1]\times\R^m$. Therefore, the Hausdorff distance defines a metric for the restrictions of the graphs $f\restr{A}$. In turn, $d_H^{\epsilon,p}$ is a metric.\\
    
	\noindent (A1): Follows directly from our choice of $\mu$ being the Lebesgue measure.\\
	
	\noindent (A2): We prove a stronger statement: for all $t\in[0,1]$ and $\epsilon>0$,
	$$
	d_H(f\restrcB, g\restrcB) \leq \sup_{s\in{B(t,\epsilon)}} d_\mathcal{V}(f(s),g(s)).
	$$
    From the definition of $d_H$ and using $d_\Cur$ the $q$-metric \eqref{eq:q-metric}, for any $a\in{B(t,\epsilon)}$, it holds that
    $$
        \inf_{b\in{B(t,\epsilon)}} d_\Cur( \Cur_f(a),\Cur_g(b) ) \leq d_\Cur( \Cur_f(a),\Cur_g(a) )=  d_\mathcal{V}( f(a),g(a) ),
    $$
    from which it follows that
    $$
    \sup_{a\in{B(t,\epsilon)}}\inf_{b\in{B(t,\epsilon)}} d_\Cur( \Cur_f(a),\Cur_g(b) ) \leq \sup_{a\in{B(t,\epsilon)}}  d_\mathcal{V}( f(a),g(a) ).
    $$
    The claim now follows by considering the definition of the Hausdorff distance and interchanging the roles of $f$ and $g$. \\

	\noindent(A3): The first inequality in the proof of (A2) above, for continuous $f$ and $g$, implies that
	$$
	\limsup_{\epsilon\rightarrow0} d_H(f\restrcB,g\restrcB) \leq  d_\mathcal{V}( f(t),g(t) ).
	$$
	For the lower bound, using Definition \ref{def:Hausdorff} and $d_\Cur$ the $q$-metric, it holds that
$$
d_H(f\restrcB,g\restrcB) \geq \inf_{s\in{B(t,\epsilon)}} d_\Cur( \Cur_f(t),\Cur_g(s) )=  \inf_{s\in{B(t,\epsilon)}} \left[ |t-s|^q + d_\mathcal{V}( f(t),g(s) )^q \right]^\frac{1}{q}.
$$
	By continuity of $f$ and $g$, letting $\epsilon\rightarrow0$, it follows that
	$$
	\inf_{s\in{B(t,\epsilon)}}\left[ |t-s|^q + d_\mathcal{V}( f(t),g(s) )^q \right]^\frac{1}{q} \rightarrow  d_\mathcal{V}( f(t),g(t) ).
	$$
	The proof for $q = \infty$ is analogous. \\
     
	 \noindent (A4) and (A5): Consider two restrictions $f_{(t_1,\epsilon_1)}$ and $f_{(t_2,\epsilon_2)}$ of a single function $f$. 
    Obviously,
    \begin{align*}
        d_H(f_{(t_1,\epsilon_1)}, f_{(t_2,\epsilon_2)}) =\max &\left\{\sup_{a\in B{(t_1,\epsilon_1)}\setminus B{(t_2,\epsilon_2)}}\inf_{b\in B{(t_2,\epsilon_2)}}d_C(C_f(a),C_f(b) ), \right. \\
        &\left. \qquad\sup_{a\in B{(t_2,\epsilon_2)}\setminus B{(t_1,\epsilon_1)}}\inf_{b\in B{(t_1,\epsilon_1)}}d_C(C_f(a),C_f(b) )\right\}.
    \end{align*}
	%
    From the continuity of $f$ we know that for each $\varepsilon_f>0$ there is $\delta>0$ such that $d_C(C_f(a),C_f(b))\leq \varepsilon_f$ when $\vert a-b\vert \leq\delta$.\\
    
    Fix $t_1=t_2=t$, with $\epsilon_1=\epsilon_n$ and $\epsilon_2=\epsilon$. 
    Suppose, w.l.o.g. that $\epsilon_n>\epsilon$. 
    Then $B{(t,\epsilon)}\subset B{(t,\epsilon_n)}$. 
    Consider $a\in B{(t,\epsilon_n)}\setminus B{(t,\epsilon)}$. 
    There is a point $b\in B{(t,\epsilon)}$ such that $\vert a-b \vert \leq \vert \epsilon_n-\epsilon \vert$. 
    For $n$ large enough, we have $\vert \epsilon_n-\epsilon \vert<\delta$ and consequently $d_H(f_{(t,\epsilon_n)}, f_{(t,\epsilon)})\leq \varepsilon_f$. This shows (A4).\\
    
    Fix now $\epsilon_1=\epsilon_2=\epsilon$, with $t_1=t_n$ and $t_2=t$. If $a\in B{(t_n,\epsilon)}\setminus B{(t,\epsilon)}$ or $a\in B{(t,\epsilon)}\setminus B{(t_n,\epsilon)}$, there is $b$ from $B{(t,\epsilon)}$ or $B{(t_n,\epsilon)}$, respectively, satisfying $\vert a-b\vert \leq \vert t_n-t\vert$. Again, if $n$ is chosen large enough so that $\vert t_n-t\vert<\delta$, the desirable inequality $d_H(f_{(t_n,\epsilon)}, f_{(t,\epsilon)})\leq \varepsilon_f$ follows. This shows (A5).\\

\noindent\textbf{The Fr\'echet case:}\\
Note that $d_F$ is a metric, and thus so is $d_F(f\restrcB,g\restrcB)$ for every $t$ and any $\epsilon$. In turn, $d_F^{\epsilon,p}$ is a metric.\\
     
    \noindent (A1): The claim follows directly as for the Hausdorff distance.\\
    
    \noindent (A2): Again, we prove a stronger statement: for all $t\in[0,1]$ and $\epsilon>0$,
	$$
	d_F(f\restrcB, g\restrcB) \leq \sup_{s\in{B(t,\epsilon)}} d_\mathcal{V}(f(s),g(s)).
	$$ 
	%
	Denote by ${\tilde{\Cur}_f}$ and $\tilde{\Cur}_g$ the curves corresponding to $\tilde{f}\restrcB$ and $\tilde{g}\restrcB$, respectively (with the ball $B(t,\epsilon)$ properly restricted to $[0,1]$). The choice of $d_\Cur$ to be the $q$-metric \eqref{eq:q-metric} yields
    \begin{align*}
        d_F(f\restrcB,g\restrcB) &= \inf\limits_{\phi\in\Phi} \sup\limits_{s\in [0,1]} d_\Cur(\tilde{\Cur}_f(s),\tilde{\Cur}_g(\phi(s))) 
         \leq \sup\limits_{s\in [0,1]} d_\Cur( \tilde{\Cur}_f(s), \tilde{\Cur}_g(s))\\
        & = \sup\limits_{s\in [0,1]} d_\mathcal{V}( \tilde{f}_{(t,\epsilon)}(s), \tilde{g}_{(t,\epsilon)}(s) ) = \sup\limits_{s\in B(t,\epsilon)} d_\mathcal{V}( f(s),g(s)),
    \end{align*}
    and the claim follows directly.\\
    
    \noindent (A3): The reasoning in the proof of (A2) above, for continuous $f$ and $g$, implies that
    $$
    \limsup_{\epsilon\rightarrow0}d_F(f\restrcB,g\restrcB)\leq  d_\mathcal{V}( f(t),g(t)).
    $$
    For the lower bound, fix $t \in [0,1]$. By the continuity of both $f$ and $g$, for any $\eta > 0$ there exists $\epsilon > 0$ such that $\left\vert s - t \right\vert \leq \epsilon$ implies $d_\mathcal{V}( f(t), f(s)) \leq \eta$ and $d_\mathcal{V}( g(t), g(s)) \leq \eta$. For any $s_1, s_2 \in [0,1]$ with $\left\vert s_1 - t \right\vert \leq \epsilon$ and $\left\vert s_2 - t \right\vert \leq \epsilon$ we obtain 
			\[	
			\begin{aligned}
			d_\mathcal{V}( f(t), g(t)) & \leq d_\mathcal{V}( f(t), f(s_1) ) + d_\mathcal{V}( f(s_1), g(s_2) ) + d_\mathcal{V}( g(s_2), g(t)) \\
			& \leq 2 \, \eta + d_\mathcal{V}(f(s_1),g(s_2)).	
			\end{aligned}
			\]
		For any $s_1, s_2 \in B(t,\epsilon)$, we get that $d_\mathcal{V}(f(s_1),g(s_2)) \geq d_\mathcal{V}( f(t), g(t)) - 2\,\eta$, meaning that when restricted to the ball $B(t,\epsilon)$, the graphs of the functions $f$ and $g$ are at least $(d_\mathcal{V}( f(t), g(t)) - 2\,\eta)$-separated. Definition~\ref{def:Frechet-differentRestrictions} directly gives that $d_F(f_{(t,\epsilon)},g_{(t,\epsilon)}) \geq d_\mathcal{V}( f(t), g(t)) - 2\,\eta$. Taking $\eta \to 0$, we obtain
		\[	\liminf_{\epsilon\rightarrow0} d_F(f\restrcB,g\restrcB) \geq d_\mathcal{V}( f(t), g(t)),	\]
		which together with the bound for the upper limit yields the result. \\
    
		\noindent (A4) and (A5): For any intervals $[a,b],[c,d]\subset[0,1]$ we can write for $s\in[0,1]$
			\[ \left\vert \theta_{[c,d]}(s)-\theta_{[a,b]}(s) \right\vert = \left\vert (c-a)(1-s)+(d-b)s \right\vert \leq \max\{\vert c-a\vert,\vert d-b\vert\}.	\]
		Consequently, for any $s\in [0,1]$, $\epsilon \geq 0$, and $\delta \geq 0$
		\begin{equation*}
		\vert \theta_{{B(t,\epsilon)}}(s)-\theta_{{B(t,\epsilon+\delta)}}(s)\vert \leq \delta \quad \mbox{ and } \quad \vert \theta_{{B(t,\epsilon)}}(s)-\theta_{{B(t+\delta,\epsilon)}}(s)\vert \leq \delta.
		\end{equation*}
		By Definition~\ref{def:Frechet-differentRestrictions} and the definition of the $q$-metric \eqref{eq:q-metric} it holds that
    \begin{align*}
    d_F (f\restrcB,f_{(t,\epsilon+\delta)}) & = \inf_{\phi\in\Phi} \sup_{s\in[0,1]} d_\Cur( \tilde f\restrcB(s) , \tilde f_{(t,\epsilon+\delta)}(\phi(s)) )\\
    & \leq \sup_{s\in[0,1]} d_\Cur( \tilde f\restrcB(s) , \tilde f_{(t,\epsilon+\delta)}(s) ) \\
        & = \sup_{s\in[0,1]} d_\mathcal{V}( f(\theta_{{B(t,\epsilon)}}(s)),f(\theta_{{B(t,\epsilon+\delta)}}(s)) ) \leq \varepsilon_f. 
    \end{align*}
The last inequality uses our first derived bound and the continuity of $f$, which gives that $d_\mathcal{V}( f(s_1),f(s_2) ) \leq \varepsilon_f$ whenever $|s_1-s_2|\leq\delta$. The claim now follows by letting $\delta\rightarrow0$, since in that case we may write $\varepsilon_f \to 0$. This concludes (A4). To prove (A5), one uses the same argument and the second inequality above. 
\end{proof}

\begin{rmk}
    As an interesting special case, consider $[0,1]\times\mathcal{V}$ equipped with the $L_{\infty}$ distance
    $$
    d_C(\Cur_f(t), \Cur_g(s)) = \max\{|t-s|, \Vert f(t)-g(s) \Vert_\mathcal{V} \},
    $$
    where $\Vert\cdot\Vert_\mathcal{V}$ is a norm on $\mathcal{V}$. Then, the one-sided Fr\'echet distance is equivalent to the Skorokhod distance (see~\citet{Billingsley1999}) between the curves $\Cur_f$ and $\Cur_g$ (see \citet{MajumdarAndPrabhu2015}), and consequently $d_F^{\epsilon,p}$ yields an integrated ball version of the Skorokhod distance.
\end{rmk}


\section{Simulated examples} \label{sec:simulated}

Hausdorff and Fr\'echet distances  are commonly used in pattern matching and outlier detection applications.
In this section, we study the behavior of the associated integrated ball metrics in a range of simulated examples considered in the literature. 
The distances $d_H^{\epsilon,p}$ and $d_F^{\epsilon,p}$ are computed in four different settings:  three models described in \citet{DaiEtal2020}, together with an additional model designed in the spirit of the motivating example of \citet{MarronAndTsybakov1995}. Throughout this section, we use $p=2$.

Each of the models presents different types of outliers controlled by outlyingness parameters. For each outlier type, we study the behavior of $d_H^{\epsilon,p}$ and $d_F^{\epsilon,p}$ as a function of the locality parameter $\epsilon$, over a range of parameter combinations. 
The four outlier models are described below. 
In each model, $e_X(t)$ is a centered Gaussian process with covariance function $K(s,t) = \mathrm{cov}\{e_X(s), e_X(t)\} = \exp\{-|s-t|\}$.\\

\noindent\textbf{Model 1 (Jump):} The base model is $X(t) = 4t + e_X(t),\ t\in[0,1]$. The outlier model is $Y(t) = 4t + H_1\,\indi{t>0.7} + e_X(t)$, where $\mathbb{I}$ is the indicator function, and the parameter $H_1$, controlling the height of the jump, was chosen on a uniform grid in $[0,2]$.

\smallskip

\noindent\textbf{Model 2 (Peak):} The base model is the same as in Model 1. The outlier model is $Y(t) = 4t + H_2\,\indi{0.5 - W\leq t \leq 0.5+W} + e_X(t)$. The parameters $H_2\in[0.5,3]$ and $W\in[0,0.2]$, controlling the height and width of the peak (respectively), were chosen on uniform grids on their respective intervals.

\smallskip

\noindent\textbf{Model 3 (Phase):} The general model is $X(t) = S_{P}(t) + 0.9\, e_X(t),\ t\in[0,1]$, where $S_{P}(t)$ is the function with a peak of height $6$ at $t=P$, generated by smoothing (using a Gaussian smoother) the piecewise linear function interpolating the points $(0,0)$, $(P,6)$ and $(1,0)$.
For the base model, the peak is located at $P=0.3$. The outlier model is $Y(t) = S_{P}(t) + 0.9\, e_X(t),\ t\in[0,1]$, where the peak location parameter $P$ was chosen from a uniform grid on $[0.3,0.7]$.

\smallskip

\noindent\textbf{Model 4 (Feature misalignment):} The general model is 
$$X(t) = \indi{|t-T|>0.06}(S(t)+0.25\,e_X(t)) + \indi{|t-T|\leq 0.06}L_{T,H}(t),$$ where $S(t)$ is the function with a peak of height $3$ generated by smoothing the piecewise linear function interpolating the points $(0,2)$, $(0.2,3)$ and $(1,0)$ and $L_{T,H}(t)$ is a piecewise linear function that generates a peak of height $H$ at $T$. The function $L_{T,H}(t)$ interpolates the points $(T-0.06,S(T-0.06)+0.25\,e_X(T-0.06))$, $(T,H)$ and $(T+0.06,S(T+0.06)+0.25\,e_X(T+0.06))$. For the base model, the peak times $T$ followed the uniform distribution on $[0.7, 0.85]$ and peak heights $H$ followed the uniform distribution on $[3,4]$. For the outlier model, the peak height was $3.5$ and the peak times $T$ were chosen from a uniform grid on $[0.3,0.8]$.

\smallskip

%
%

%

For each model, we simulated $20$ realizations from a base model and added the outliers for different parameter combinations. 
For each of the outliers, we computed their average distances $d_H^{\epsilon,p}$ and $d_F^{\epsilon,p}$ to the observations of the base models, for $\epsilon\in[0,1]$. 
For comparison, we also computed the corresponding average distances $d_H^{\epsilon,p}$ and $d_F^{\epsilon,p}$ between the observations, without the presence of the contaminating outliers. 
Figure~\ref{fig:Models} displays the realizations, together with the average integrated ball Hausdorff and Fr\'echet distances as a function of $\epsilon$.
For a cleaner comparison of the outlier types, the same realizations were used in Models $1$ and $2$ for the observations from the base model as well as the error function $e_X$ of the outlier. 

\begin{figure}[!ht]
    \centerline{\includegraphics[scale=0.4]{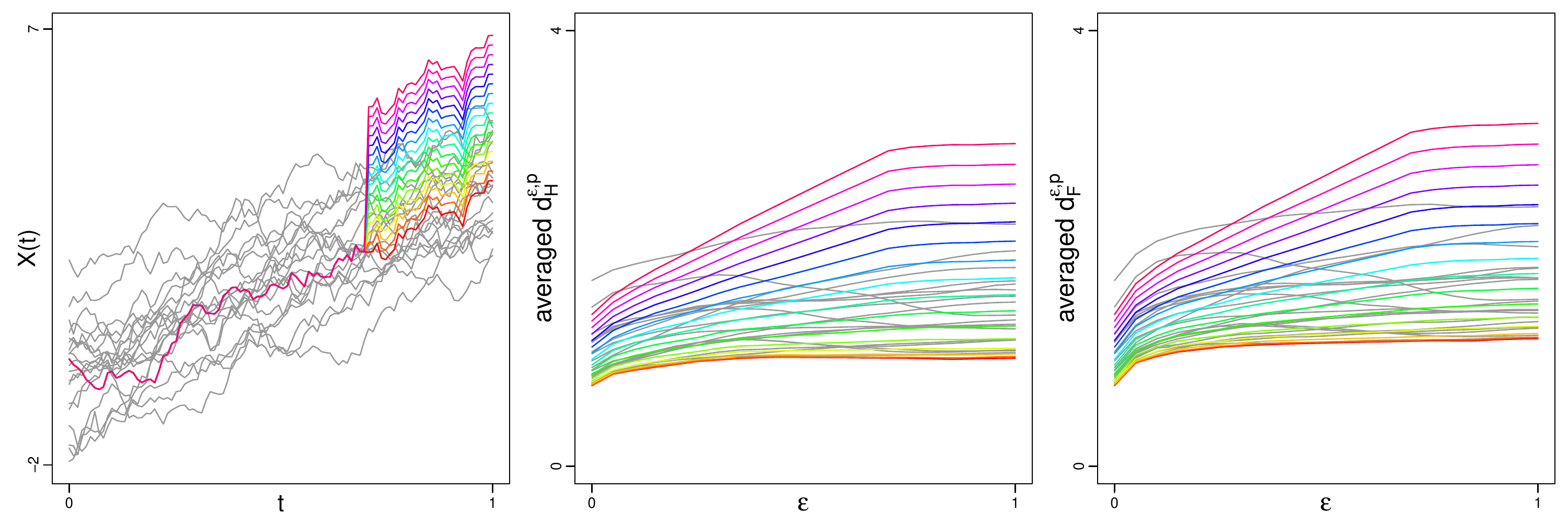}}
    \centerline{\includegraphics[scale=0.4]{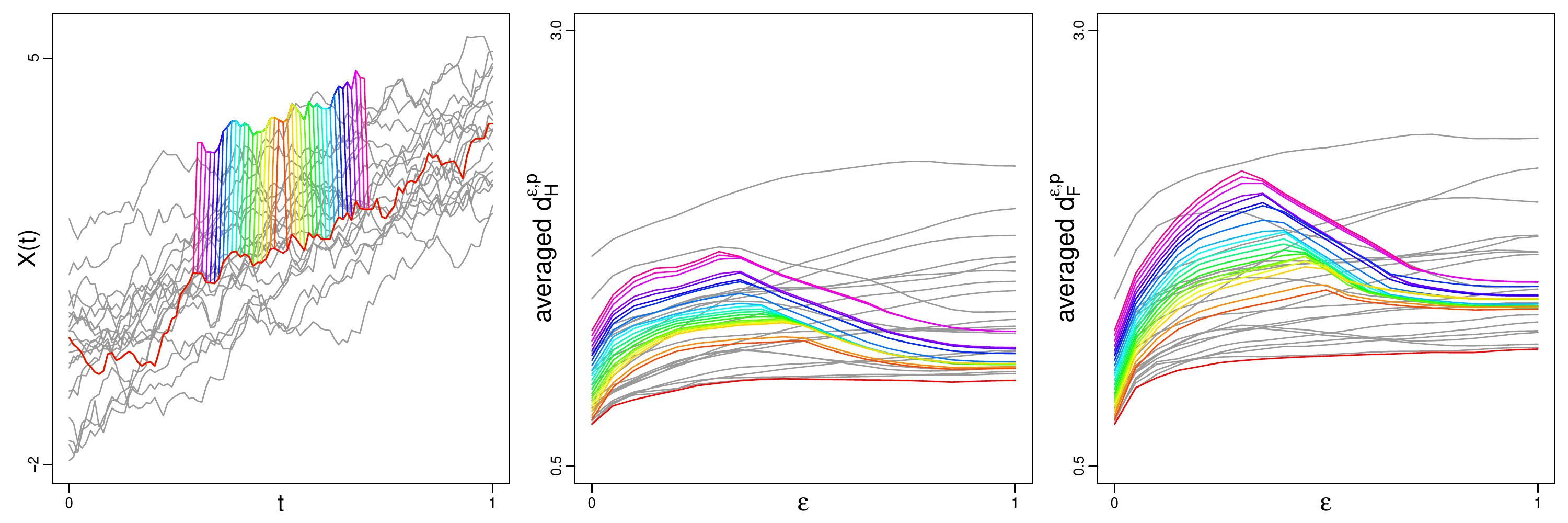}}
    \centerline{\includegraphics[scale=0.4]{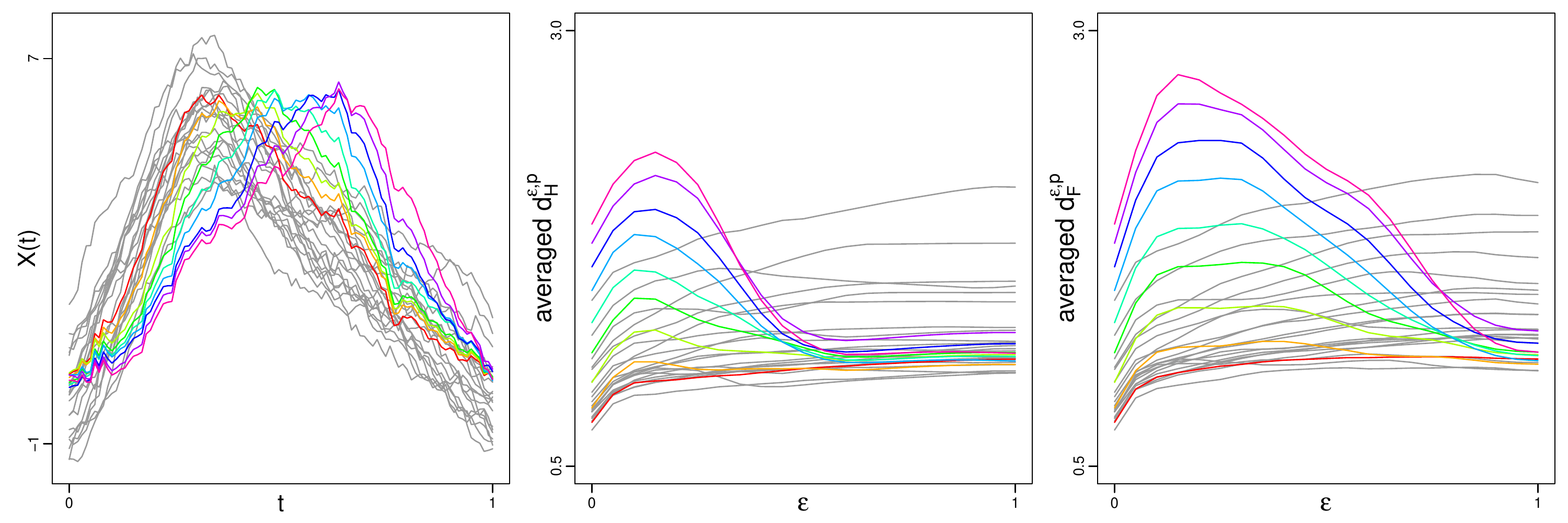}}
    \centerline{\includegraphics[scale=0.4]{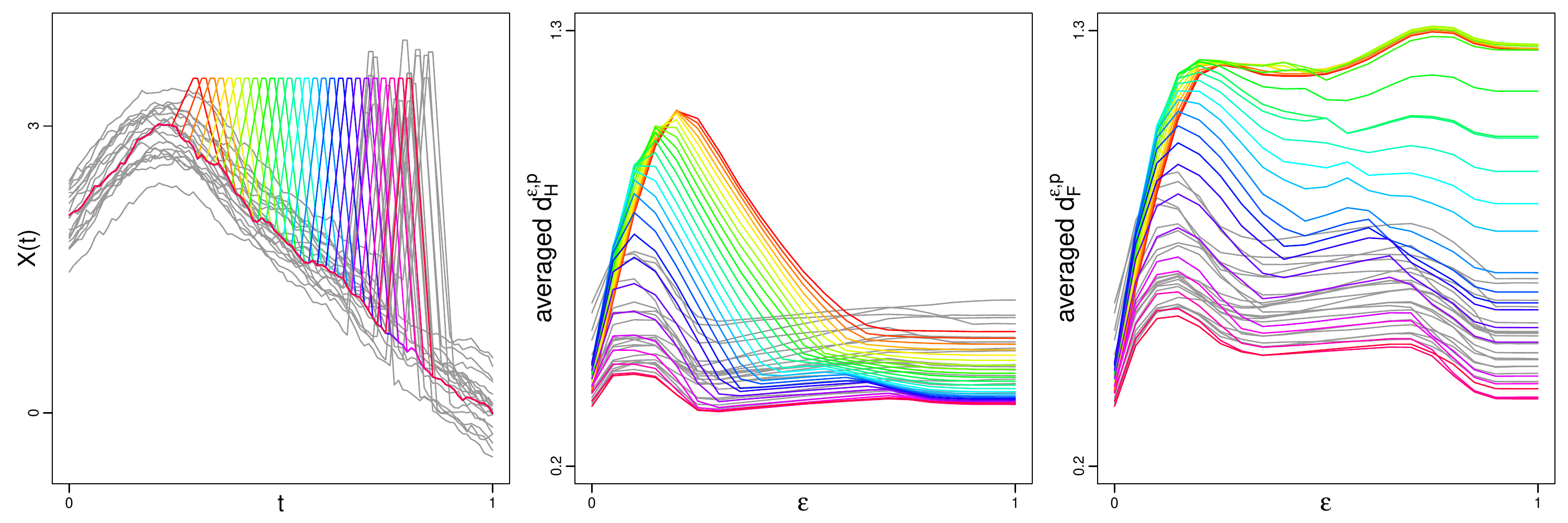}}
    \caption{First column: $20$ i.i.d. realizations from Models $1-4$ (in rows) with contaminating curves displayed in color. Second and third columns: Averaged $d_H^{\epsilon,p}$ and $d_F^{\epsilon,p}$ distances (as functions of $\epsilon\in[0,1]$) between outliers and observations (in color) and between observations (in gray).}
    \label{fig:Models}
\end{figure}


The integrated ball Hausdorff and Fr\'echet distances exhibit a range of interesting behaviors across the different models. Not only do the curves $\epsilon\mapsto d_H^{\epsilon,p}$ and $\epsilon\mapsto d_F^{\epsilon,p}$ associated with outliers allow to separate them from the non-contaminated observations, they also present strikingly different structures to those of the base model. 

In Model 1 (jump outlier), the values of $d_H^{\epsilon,p}$ and $d_F^{\epsilon,p}$ grow steadily from the $L_p$ distance (at $\epsilon = 0$) to the value of the corresponding global metric (at $\epsilon = 1$). 
For both distances, this behavior is due to the outlying domain being included in a larger collection of balls in Definition~\ref{def:integral-ball-metric}, as the radius of the ball increases towards covering the entire domain. 
Furthermore, the outliers clearly display a different behavior from the base observations: the distances $d_H^{\epsilon,p}$ and $d_F^{\epsilon,p}$ initially increase and then reach a plateau, staying nearly constant at the global Hausdorff, resp. Fr\'echet, value.

%

Models $2$ and $3$ exhibit interesting peaked behavior. 
Initially, as $\epsilon$ grows, the outlying peak is included in a larger set of balls, increasing the value of the metric. However, as $\epsilon$ keeps growing, the local matching of the features becomes less strict, allowing the curves to be matched more diagonally. 

In Model $2$, the values of $d_H^{\epsilon,p}$ and $d_F^{\epsilon,p}$ grow from the $L_p$ distance towards a pronounced peak that occurs between $\epsilon = 0.2$ to $\epsilon = 0.5$ (depending on the width of the peak), before settling down at a lower value of the corresponding global metric. When the level of the peak was not clearly above the highest observations of the base model, the global $d_H$ and $d_F$ both did a poor job at picking out the outlying functions. With suitable choices of $\epsilon$, the integrated ball versions were able to single out and separate the outliers much better than their global counterparts. With narrow peak widths, the $L_p$ distance also struggles to notice the outliers.

In Model $3$ the peaked behavior is extremely pronounced. As the scale of the $y$-axis is larger than that of the $x$-axis, it becomes less penalizing to match the curves horizontally rather than vertically. In this case, this behavior is very prominent, as the global versions of $d_H$ and $d_F$ measure the distances between the curves almost exclusively horizontally. This kind of behavior can be beneficial, for example in applications where the variation in phase is of lesser concern than the variation in amplitude. In Model 3, the integrated versions $d_H^{\epsilon,p}$ and $d_F^{\epsilon,p}$ allow for better control in the locality of this ``feature matching'' property. 

In Models 2 and 3, the shapes of the integrated ball metric curves allow to discriminate immediately between outliers and noncontaminated curves: the outliers produce redescending distances as $\epsilon$ grows to $1$.

In Model $4$, the global Hausdorff distance fails to isolate the outliers, even when the timing of the sharp feature is extremely misplaced compared to the base model. 
As opposed to Model 3, the $L_p$ distances do not detect the outliers either, while the global Fr\'echet distance does. 
In both cases, however, the integrated ball versions exhibit the interesting peaked behavior, seen in Model $2$ and Model $3$ as well. 

%
%


\section{Discussion}

This paper provides an integrated version of any pseudometric $d$. Naturally, the integrated version $d^{\epsilon,p}$ of $d$ inherits some properties of its underlying pseudometric $d$ as we illustrate in Theorems~\ref{thm:integral-ball-key-properties} and~\ref{thm:discretisation-convergence}. An exhaustive study of the links between integrated and base metric is beyond the scope of this paper. We, however, provide a few avenues of research that might prove fruitful in the future.

Using pseudometrics in FDA is of interest in many statistical problems. Section~\ref{sec:simulated} illustrates this in an outlier detection setting. Of course, such metrics are of interest in other problems, such as nonparametric regression (where metrics can be used in functional kernel regression), clustering, and classification. See, for example, the review papers~\citet{AneirosEtAl2019b,AneirosEtAl2019a,Cuevas2014} and \citet{GoiaAndVieu2016}.

In the regression setting, asymptotic analysis requires studying small ball probabilities (SBPs). Very little is known for Fr\'echet and Hausdorff distances in that respect. Such analyses are therefore promising, yet technically challenging prospects for future research. In a more general framework, one might be interested in deriving functional SBPs based on corresponding results for the underlying (pseudo)metric $d$. For existing results on SBPs in FDA or the setting of Gaussian processes, see~\citet{BongiornoEtAl2017,FerratyAndVieu2006} and \citet{LiAndShao2001}.

Finally, the results derived in this paper deal with a general pseudometric $d$. In practical applications, one might wish to select $d$ adaptively to the problem at hand. While there is no universal optimal selection procedure for $d$, crossvalidation (choosing $d$ in a given set of pseudometrics) could be applied in selected applications, such as classification or regression. Naturally, regardless of the choice of $d$, crossvalidation can be conducted on $\epsilon$ and/or $p$ as well.


\section*{Acknowledgements}

\noindent S. Helander wishes to thank the Emil Aaltonen Foundation (grant 200033 N1). P.~Laketa was supported by the OP RDE project ``International mobility of research, technical and administrative staff at the Charles University" CZ.02.2.69/0.0/0.0/18\_053/0016976. The work of S. Nagy and P. Laketa was supported by the grant 19-16097Y of the Czech Science Foundation, and by the PRIMUS/17/SCI/3 project of Charles University. G. Van Bever thanks the Fond National pour la Recherche Scientifique (FNRS) for their support through the CDR grant J.0208.20. 


\end{document}